\author[]{Kaveh Eftekharinasab}
\title[]{On the generalization of the Darboux theorem}
\address{Topology lab. \\ Institute of Mathematics of NAS of Ukraine \\ Te\-re\-shchen\-kivska st. 3, Kyiv, 01601 Ukraine}
\email{kaveh@imath.kiev.ua}
\keywords{ Weak symplectic structures, Darboux charts, Fr\'{e}chet manifolds.}
\subjclass[2010]{
53D35,  
58B20. 
}
\newtheorem{theorem}{Theorem}[section]
\newtheorem{lemma}{Lemma}[section]
\newtheorem{remk}{Remark}[section]
\newtheorem{prop}{Proposition}[section]
\newtheorem{defn}{Definition}[section]
\newtheorem{cor}{Corollary}[section]
\theoremstyle{definition}
\DeclareMathAlphabet{\mathpzc}{OT1}{pzc}{m}{it}
\newcommand{\mc}{MC^k}
\newcommand{\ff}{\mathbb{F}}
\newcommand{\rr}{\mathbb{R}}
\newcommand{\nn}{\mathbb{N}}
\newcommand{\sn}{\parallel}
\DeclareMathOperator{\dd}{d}
\DeclareMathOperator{\id}{id}
\newcommand{\me}{\mathcal{M}}
\begin{document}

\begin{abstract}
We provide sufficient conditions for the existence of  Darboux charts on weakly symplectic bounded Fr\'{e}chet manifolds by using the Moser's trick. 
\end{abstract}
\maketitle
\section{Introduction}
The Darboux theorem has been extended to weakly symplectic  Banach manifolds by using Moser's method, see~\cite{Bam}.
The essence of this method is to obtain an appropriate isotopoy  generated by a time dependent vector field that provides the chart transforming of symplectic forms to constant ones. In order to apply this method to more general context of Fr\'{e}chet manifolds
we need to establish the existence of the flow of a vector filed which in general does not exist.
One successful approach to the differential geometry in Fr\'{e}chet context is
in terms of projective limits of Banach manifolds (see \cite{dd}). In this framework,  
a version of the Darboux theorem is proved in \cite{K3}. 

Another approach to Fr\'{e}chet geometry is to use the stronger notion of differentiability (see~\cite{k}). This differentiability leads to a new category of generalized manifolds, the so call bounded (or $ \mc $) Fr\'{e}chet manifolds. In this paper
we prove that in that context the flow of a vector field exists (Theorem \ref{flow}) and we will apply the Moser's method to obtain the Darboux theorem (Theorem~\ref{dar}).

The obtained theorem might be useful to study the topology of the space  of  Riemannian metrics $ \me $ as it
has the structure of a nuclear bounded Fr\'{e}chet manifold. Theorem [\S48.9,\cite{KM}] asserts that
if $(M,\sigma)$ is a smooth weakly symplectic convenient manifold which admits smooth partitions of unity in $C_\sigma^\infty(M,\rr)$, and which admits `Darboux chart', then the symplectic cohomology equals to the De Rham cohomology: $H^k_\sigma(M)= H^k_{DR}(M)$. 

The manifold $ \me $ admits  smooth partition of unity in $C_\sigma^\infty(M,\rr)$ (it follows from Theorem~\cite[Theorem 16.10]{KM} and Definition~\cite[Definition 16.1]{KM}) so it is interesting to ask if it has a Darboux
chart. This, in turn, rises the question: how to construct on $\me$ weak symplectic forms. It is known that (see~\cite{kal}) expect Hilbert manifolds an infinite dimensional manifold may not admit a Lagrangian splitting so in general the Weinstein's construction (\cite{Wei}) is not applicable. Moreover, the Marsden's idea to construct
a symplectic form on a manifold by using the canonical form on its cotangent bundle also is not applicable as there
is no natural smooth vector bundle structure on the cotangent bundle~\cite[Remark I.3.9]{neeb}. It is not clear yet how to construct
symplectic forms on $ \me $ but it seems that it might arise from a weak Riemannian metric and complex structure, however, that would require
some assumptions and ingredients different from ones in Theorem~\ref{dar}.

\section{Bounded differentiability}
In this section we prove the existence of the local flow of a $ \mc $-vector field, we refer to~\cite{k} for more details on bounded Fr\'{e}chet geometry. We denote by $(F,\rho)$ a Fr\'{e}chet space  whose topology is defined by a complete translational-invariant metric $\rho$. We consider only metrics with absolutely convex balls. 
Note that every Fr\'{e}chet space  admits such a metric, cf~\cite{k}. One reason to choose this
particular metric is that  a metric with this property can give us a collection of seminorms that defines the same topology. More precisely: 
\begin{theorem}[\cite{m}, Theorem 3.4] \label{semi}
	Assume that $(F,\rho)$ is a Fr\'{e}chet space and $\rho$ is a metric with absolutely convex balls. Let $$B_{\frac{1}{i}}^\rho(0) \coloneq \{ y\in F  \mid \rho(y,0) < \tfrac{1}{i}\},$$ and suppose $U_i$'s, $i \in \nn$, are convex 
	subsets of $B_{\frac{1}{i}}^\rho(0)$. Define the Minkowski functionals
	$$
	\parallel v \parallel^i \coloneq \inf \{ \epsilon > 0 \mid \epsilon \in \mathbb{R},\,\dfrac{1}{\epsilon} \cdot v \in U_i\}. 
	$$
	These Minkowski functionals are continuous seminorms on $F$. A collection $\{\parallel v \parallel^i\}_{i \in \nn}$ of these seminorms gives the topology of $F$. 
\end{theorem}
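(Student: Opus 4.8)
The plan is to treat the $\|\cdot\|^i$ as ordinary Minkowski gauges and to compare the two topologies at the level of neighbourhood bases, using that the metric balls $B_{1/i}^\rho(0)$, $i\in\nn$, form a basis of neighbourhoods of $0$ for the Fr\'echet topology. Throughout I read each $U_i$ as an absolutely convex neighbourhood of $0$ contained in $B_{1/i}^\rho(0)$: being a $0$-neighbourhood in the topological vector space $F$ it is absorbing, so $\|v\|^i<\infty$ for every $v$, and balancedness is what upgrades the gauge from a merely sublinear functional to a genuine seminorm. The first step is the routine Minkowski bookkeeping: from convexity together with $0\in U_i$ one obtains the sandwich
\[
\{v\in F \mid \|v\|^i<1\}\ \subseteq\ U_i\ \subseteq\ \{v\in F \mid \|v\|^i\le 1\},
\]
where for the left inclusion one picks, given $\|v\|^i<1$, some $\epsilon\in(0,1)$ with $v/\epsilon\in U_i$ and writes $v=\epsilon\,(v/\epsilon)+(1-\epsilon)\,0\in U_i$; meanwhile convexity of $U_i$ yields subadditivity of $\|\cdot\|^i$ and balancedness yields $\|\lambda v\|^i=|\lambda|\,\|v\|^i$, so each $\|\cdot\|^i$ is a seminorm on $F$.

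Next I would establish continuity of $\|\cdot\|^i$, and here is the one point that needs care: since $\rho$ is translation invariant but \emph{not} homogeneous, one must not try to bound $\|v\|^i$ by a multiple of $\rho(v,0)$ — no such estimate exists in general, and the argument has to be purely topological. Given $\epsilon>0$, the set $\tfrac{\epsilon}{2}U_i$ is again a neighbourhood of $0$ (scalar multiplication by $\epsilon/2\neq 0$ is a homeomorphism of $F$), and $v\in\tfrac{\epsilon}{2}U_i$ forces $(2/\epsilon)v\in U_i$, i.e.\ $\|v\|^i\le\epsilon/2<\epsilon$. Hence $\|\cdot\|^i$ is continuous at $0$, and by the reverse triangle inequality $\bigl|\,\|v\|^i-\|w\|^i\,\bigr|\le\|v-w\|^i$ it is continuous everywhere on $F$.

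Finally, that the family $\{\|\cdot\|^i\}_{i\in\nn}$ induces precisely the topology of $(F,\rho)$. Both topologies are translation invariant — the $\rho$-topology because $\rho$ is, the seminorm topology because any family of seminorms generates a (locally convex) vector topology — so it suffices to compare neighbourhood bases at $0$. On one side, the seminorm topology is coarser than the $\rho$-topology: by the continuity just proved, every ball $\{v\mid\|v\|^i<r\}$ is $\rho$-open. On the other side, the $\rho$-topology is coarser than the seminorm topology, because $\{B_{1/i}^\rho(0)\}_{i\in\nn}$ is a $\rho$-neighbourhood basis at $0$ and the sandwich above, combined with the hypothesis $U_i\subseteq B_{1/i}^\rho(0)$, gives
\[
\{v\in F\mid\|v\|^i<1\}\ \subseteq\ U_i\ \subseteq\ B_{1/i}^\rho(0),
\]
so a single seminorm ball already lies inside each basic $\rho$-neighbourhood. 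The two topologies therefore coincide. In summary, the only subtlety is to do without metric estimates for the inhomogeneous $\rho$ and to argue through absorbing $0$-neighbourhoods; everything else is the standard gauge computation plus the two-sided comparison of neighbourhood bases.
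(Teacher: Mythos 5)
Your argument is correct, but there is nothing in the paper to compare it against: the paper quotes this statement verbatim from M\"uller's Theorem 3.4 and uses it as a black box, giving no proof of its own. Your route is the standard self-contained one and it works: gauge bookkeeping (convexity gives subadditivity, balancedness gives absolute homogeneity, absorbency gives finiteness), the sandwich $\{v \mid \|v\|^i<1\}\subseteq U_i\subseteq\{v \mid \|v\|^i\le 1\}$, continuity at $0$ from $\tfrac{\epsilon}{2}U_i$ being a $0$-neighbourhood together with the reverse triangle inequality, and the two-sided comparison of neighbourhood bases at $0$, where $\{v \mid \|v\|^i<1\}\subseteq U_i\subseteq B^{\rho}_{1/i}(0)$ gives one inclusion of topologies and the $\rho$-continuity of each seminorm gives the other. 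The one point worth making explicit is that your reading of the hypotheses is a genuine (and necessary) strengthening of what is literally written: ``convex subsets of $B^{\rho}_{1/i}(0)$'' alone does not suffice, since a $U_i$ that is not balanced or not absorbing need not produce a finite seminorm, and a $U_i$ that is not a $0$-neighbourhood breaks both your continuity argument and the base comparison. Interpreting each $U_i$ as an absolutely convex $0$-neighbourhood is clearly the intended meaning --- the hypothesis that $\rho$ has absolutely convex balls exists precisely so that one may take $U_i=B^{\rho}_{1/i}(0)$ itself --- and you are also right that no homogeneous estimate of $\|\cdot\|^i$ against $\rho(\cdot,0)$ is available for a translation-invariant metric, so the purely topological continuity argument is the correct move.
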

In the sequel we assume that a Fr\'{e}chet space $ F $ is graded with the collection of seminorms $ \sn v \sn_F^n = \sum_{k=1}^{k=n} \sn v \sn^k  $ that defines its topology.

Let $(E,g)$  be another Fr\'{e}chet space. Let $\mathcal{L}_{g,\rho}(E,F)$ be the set of all 
linear maps $ L: E \rightarrow F $ such that 
\begin{equation*}
\mathpzc{Lip} (L )_{g,\rho}\, \coloneq \displaystyle \sup_{x \in E\setminus\{0\}} \dfrac{\rho (L(x),0)}{g( x,0)} < \infty.
\end{equation*}
The transversal-invariant metric 
\begin{equation} \label{metric}  
D_{g,\rho}: \mathcal{L}_{g,\rho}(E,F) \times \mathcal{L}_{g,\rho}(E,F) \longrightarrow [0,\infty) , \,\,
(L,H) \mapsto \mathpzc{Lip}(L -H)_{g,\rho} \,,
\end{equation}
on $\mathcal{L}_{\rho,g}(E,F)$ turns it  into an Abelian topological group.
Let $ U $ an open subset of $ E $, and $ P:U \rightarrow F $
a continuous map. If $P$ is Keller-differentiable, $ \operatorname{d}P(p) \in \mathcal{L}_{\rho,g}(E,F) $ for all $ p \in U $, and the induced map 
$ \operatorname{d}P(p) : U \rightarrow \mathcal{L}_{\rho,g}(E,F)   $ is continuous, then $ P $ is called bounded differentiable. We say $ P $ is $ MC^{0} $ 
and write $ P^0 = P $ if it is continuous. 
We say $P$ is an $ MC^{1} $ and write  $P^{(1)} = P' $ if it is bounded differentiable. We define  for $(k>1) $  maps of class $ MC^k$ recursively, see~\cite{k}. If $ \varphi (t) $ is a continuous path in a Fr\'{e}chet space
we denote its derivative by $ \dfrac{d}{d t} \varphi(t)$.  

Within this framework we  define $\mc$ (bounded) Fr\'{e}chet manifolds, $\mc$-maps of manifolds
and tangent bundles and their $ \mc $-vector fields. 
A $ \mc $-vector field $X$ on  a $ \mc $-Fr\'{e}chet manifold $M$ is a $ \mc $-section 
of the tangent bundle $\pi_{TM} : TM \to M$, i.e. a $\mc$ map $X : M \to TM$ 
with $\pi_{TM} \circ X = \id_M$. We write $\mathcal{V}(M)$ for the space of all vector fields
on $M$. If $f \in MC^\infty(M,E)$ is a smooth function on $M$ with values 
in a Fr\'{e}chet space $E$ and 
$X \in \mathcal{V}(M)$, then we obtain a smooth function on $M$ via 
$$ X.f := \dd f \circ X : M  \to E. $$
For $X, Y \in \mathcal{V}(M)$, there exists a unique a vector
field $ [X,Y] \in \mathcal{V}(M)$ determined by the
property that on each open subset $U \subset M$ we have 
$$ [X,Y].f = X.(Y.f) - Y.(X.f)  $$
for all $f \in MC^\infty(U,\rr)$, see~\cite[Lemma II.3.1]{neeb2}.  

A vector field on an infinite dimensional Fr\'{e}chet manifold may have no, one ore multiple  integral curves.
However, a $ \mc $-vector field always has a unique integral curve.
\begin{prop} \cite[Proposition 5.1]{k}\label{ht}
	Let $U \subseteq F$ be open and let $X : U \rightarrow F$ be a $\mc$-vector field, $k\geq 1$. Then for $p_0 \in U$, there is an integral curve
	$\ell : I \rightarrow F$ at $p_0$. Furthermore, any two such curves are equal on the intersection of their domains. 
\end{prop}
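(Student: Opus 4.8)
The plan is to run the classical Picard iteration. A bare Fr\'{e}chet space supports neither a Lipschitz condition nor a contraction principle, but the $MC^1$ hypothesis on $X$ --- phrased through the metric $\rho$, whose balls are absolutely convex, and through the space $\mathcal{L}_{\rho,\rho}(F,F)$ of Lipschitz-bounded operators --- restores exactly the two ingredients one needs. First I would restate the problem as a fixed-point equation: a continuous curve $\ell\colon I\to U$ with $0\in I$ is an integral curve of $X$ at $p_0$ if and only if $\ell(0)=p_0$ and $\ell(t)=p_0+\int_0^{t}X(\ell(s))\,\dd s$. Here the Riemann integral of a continuous $F$-valued path is well defined because $(F,\rho)$ is complete; I would collect from \cite{k} its elementary properties --- linearity, continuity in the upper limit, and compatibility with $\rho$ in the form $\rho\bigl(\int_0^{t}h(s)\,\dd s,0\bigr)\le\bigl|\int_0^{t}\rho(h(s),0)\,\dd s\bigr|$, which is precisely where the absolutely convex balls of $\rho$ (equivalently, the generating seminorms of Theorem~\ref{semi}) enter.

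Next I would produce a local Lipschitz bound for $X$. Fix $r>0$ with $\overline{B}:=\overline{B_{r}^{\rho}(p_0)}\subseteq U$; this ball is convex since the balls of $\rho$ are absolutely convex. Since $X$ is $MC^1$, the assignment $\xi\mapsto\dd X(\xi)$ is continuous from $U$ into $\mathcal{L}_{\rho,\rho}(F,F)$ with the metric $D_{\rho,\rho}$, and $\mathpzc{Lip}(\dd X(p_0))_{\rho,\rho}<\infty$; hence, after shrinking $r$, there is a finite $L$ with $\sup_{\xi\in\overline{B}}\mathpzc{Lip}(\dd X(\xi))_{\rho,\rho}\le L$ (using subadditivity of $\mathpzc{Lip}$). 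Writing $X(p)-X(q)=\int_0^{1}\dd X\bigl(q+\tau(p-q)\bigr)(p-q)\,\dd\tau$ for $p,q\in\overline{B}$ --- the segment stays in $\overline{B}$ by convexity --- and estimating under the integral gives $\rho\bigl(X(p)-X(q),0\bigr)\le L\,\rho(p-q,0)$ on $\overline{B}$; in particular $M:=\sup_{p\in\overline{B}}\rho(X(p),0)\le\rho(X(p_0),0)+Lr<\infty$.

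Then I would set up and solve the contraction. Choose $\delta>0$ with $\delta M\le r$ and $\delta L<1$, and let $\mathcal{C}$ be the set of continuous maps $\ell\colon[-\delta,\delta]\to\overline{B}$ with $\ell(0)=p_0$, equipped with $d_\infty(\ell,m)=\sup_{|t|\le\delta}\rho(\ell(t),m(t))$; since $\overline{B}$ is closed in the complete space $(F,\rho)$, the space $(\mathcal{C},d_\infty)$ is complete. One checks that $\Phi(\ell)(t):=p_0+\int_0^{t}X(\ell(s))\,\dd s$ maps $\mathcal{C}$ into itself (the choice $\delta M\le r$ keeps values in $\overline{B}$), and that the integral estimate together with the Lipschitz bound gives $d_\infty(\Phi(\ell),\Phi(m))\le\delta L\,d_\infty(\ell,m)$, so $\Phi$ is a contraction; its unique fixed point $\ell$ solves the integral equation, hence is $C^1$ with $\ell'=X\circ\ell$ and $\ell(0)=p_0$, and feeding the equation back in bootstraps $\ell$ to a curve of class $\mc$. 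For the uniqueness clause I would take two integral curves $\ell_1,\ell_2$ at $p_0$ defined on intervals $I_1,I_2$ and consider $J:=\{t\in I_1\cap I_2:\ell_1(t)=\ell_2(t)\}$: it is nonempty and closed by continuity, and applying the local contraction argument on a short interval around any $t_0\in J$ (with base point $\ell_1(t_0)=\ell_2(t_0)$) shows $J$ is also open in the interval $I_1\cap I_2$, hence $J=I_1\cap I_2$.

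I expect the main obstacle to be the single step with no counterpart in the Banach setting: securing the Lipschitz estimate for $X$ and the estimate for the integral in the absence of a norm. Both are delivered by the structural features built into the $\mc$-formalism --- that $\rho$ may be chosen with absolutely convex balls (Theorem~\ref{semi}), and that $\dd X$ takes values continuously in $\mathcal{L}_{\rho,\rho}(F,F)$ --- and once these two facts are in hand the remainder is the standard Picard scheme.
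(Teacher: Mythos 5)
First, note that the paper does not prove this proposition at all: it is imported verbatim from \cite[Proposition 5.1]{k}, so there is no in-paper argument to compare with; the closest internal evidence for how such estimates are meant to be run is the proof of Theorem~\ref{flow}, which works with the graded seminorms $\sn\cdot\sn^n_F$ and Gronwall's inequality, not with the metric $\rho$ directly. Your Picard scheme is the right general idea, but as written it has a genuine gap exactly at the step you flag as the crux. Your contraction estimate $d_\infty(\Phi(\ell),\Phi(m))\le\delta L\,d_\infty(\ell,m)$ and your invariance estimate (``$\delta M\le r$ keeps values in $\overline B$'') both rest on the inequality $\rho\bigl(\int_0^{t}h(s)\,\dd s,0\bigr)\le\int_0^{t}\rho(h(s),0)\,\dd s$, and this requires positive homogeneity of $v\mapsto\rho(v,0)$, which a translation-invariant metric with absolutely convex balls does not have. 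What absolute convexity of the balls actually gives is only $\rho\bigl(\int_0^{t}h(s)\,\dd s,0\bigr)\le\sup_{s}\rho(h(s),0)$ for $|t|\le 1$ (the integral is $t$ times a convex average, and scaling by $|t|\le1$ stays in the balanced convex ball); no factor $t$ comes out. Concretely, for the standard such metric $\rho(v,0)=\sup_{n}\min\{\tfrac1n,\sn v\sn^n\}$, a vector $v$ with all $\sn v\sn^n$ large has $\rho(v,0)=1$ and also $\rho(tv,0)=1$ for moderate $t\in(0,1)$, so with $h\equiv v$ the left side of your inequality is $1$ while the right side is $t$. Consequently $\Phi$ need not map $\mathcal C$ into itself for small $\delta$ (you only get the bound $M$, not $\delta M$), and you only obtain $d_\infty(\Phi(\ell),\Phi(m))\le L\,d_\infty(\ell,m)$, which is not a contraction unless $L<1$ --- something you cannot arrange by shrinking the time interval. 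The usual rescue by exponential (Bielecki) weights fails for the same reason: it too needs homogeneity. By contrast, your derivation of the $\rho$-Lipschitz bound for $X$ from the mean value formula is fine, because there the integral is over an interval of length exactly $1$, so only the convex-average bound is needed.

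The repair is to leave the metric and run all quantitative estimates with the homogeneous Minkowski seminorms of Theorem~\ref{semi} (equivalently the grading $\sn\cdot\sn^n_F$), for which $\sn\int_0^t h\sn^n\le\int_0^t\sn h(s)\sn^n\,\dd s$ does hold: establish seminorm-wise Lipschitz estimates for the $\mc$-field $X$ (this bridging step from the $D_{\rho,\rho}$-continuity of $\dd X$ to the seminorms is itself not automatic and is part of what \cite{k,m} supply), then either prove convergence of the Picard iterates seminorm by seminorm with the usual $\tfrac{(Ls)^j}{j!}$ bounds and invoke completeness of $F$, or argue existence/uniqueness via Gronwall as the paper does for Theorem~\ref{flow}. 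Banach's fixed point theorem in $(\mathcal C,d_\infty)$ with the sup-$\rho$ metric, as you set it up, cannot be applied as stated. Your closing connectedness argument for uniqueness on overlapping domains is standard and unobjectionable once the local statement is secured.
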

\begin{cor}\cite[Corollary 5.1]{k}\label{s}
	Let $U \subseteq F$ be open and let $X : U \rightarrow F$ be a $MC^k$-vector field, $k\geq 1$. Let $\ff_t(p_0)$ be the solution of $\ell'(t) = X (\ell (t)), \, \ell (t_0) = p_0$.
	Then there is an open neighborhood $U_0$ of $p_0$ and a positive real number $\alpha$  such that for every $q \in U_0$ there exists a unique integral
	curve $\ell(t) = \ff_t(q)$ satisfying  $\ell(0) = q$ and $\ell'(t) = X (\ell (t))$ for all $t \in (-\alpha,\alpha)$.
\end{cor}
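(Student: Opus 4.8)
The plan is to promote the pointwise existence--uniqueness of Proposition~\ref{ht} to a statement that is uniform in the initial condition, by running one Picard iteration valid for all nearby starting points; this is the classical argument for flows of Lipschitz vector fields, transcribed to the setting of a translation-invariant metric with absolutely convex balls. Since $X$ is autonomous, by translating in time we may take $t_{0}=0$. First I would localise. As $X$ is continuous and $U$ is open in $(F,\rho)$, choose $r>0$ with $\overline{B_r^\rho(p_0)}\subseteq U$ and $\rho\big(X(x),0\big)\le M$ on $\overline{B_r^\rho(p_0)}$ for some $M>0$; and as $X$ is $MC^{1}$, the map $x\mapsto\dd X(x)$ is continuous from $U$ into $\big(\mathcal{L}_{\rho,\rho}(F,F),D_{\rho,\rho}\big)$, so after shrinking $r$ we may also assume $\mathpzc{Lip}\big(\dd X(x)\big)_{\rho,\rho}\le C$ there for some $C>0$. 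Writing $X(y)-X(x)=\int_0^1\dd X\big(x+s(y-x)\big)(y-x)\,\dd s$ on this convex ball and estimating the integral then yields the local Lipschitz bound $\rho\big(X(y)-X(x),0\big)\le C\,\rho(y-x,0)$ for $x,y\in\overline{B_r^\rho(p_0)}$.

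Next I would set $U_0=B_{r/2}^\rho(p_0)$ and fix a single $\alpha>0$ with $M\alpha\le r/2$ and $C\alpha<1$. A continuous curve $\ell$ with $\ell(0)=q$ solves $\ell'=X\circ\ell$ on $(-\alpha,\alpha)$ exactly when $\ell(t)=q+\int_0^t X(\ell(s))\,\dd s$ there; so for $q\in U_0$ the operator $(\Phi\ell)(t)=q+\int_0^t X(\ell(s))\,\dd s$ maps the complete metric space of continuous maps $(-\alpha,\alpha)\to\overline{B_r^\rho(p_0)}$, endowed with the metric $d_\infty(\ell,m)=\sup_{|t|<\alpha}\rho\big(\ell(t)-m(t),0\big)$, into itself (this is where $M\alpha\le r/2$ enters) and contracts it by the factor $C\alpha<1$. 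By the contraction principle $\Phi$ has a unique fixed point, the integral curve $t\mapsto\ff_t(q)$, defined on all of $(-\alpha,\alpha)$ with value $q$ at $t=0$. Uniqueness of the integral curve through $q$ on $(-\alpha,\alpha)$ is then immediate from the uniqueness clause of Proposition~\ref{ht}, or from a Grönwall argument applied to $t\mapsto\rho\big(\ell_1(t)-\ell_2(t),0\big)$.

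The step I expect to be the main obstacle is the justification of the metric estimates, because $\rho$ is not homogeneous: neither the inequality $\rho\big(\int_0^t f(s)\,\dd s,0\big)\le\int_0^t\rho\big(f(s),0\big)\,\dd s$, nor the contraction estimate for $\Phi$, nor the estimate of the integral identity in the first step are formal consequences of $\rho$ merely being a metric; each has to be extracted from absolute convexity of the balls of $\rho$, equivalently from the induced Minkowski seminorms of Theorem~\ref{semi}, as in \cite{k}. Granting these elementary facts, the argument proceeds exactly as in the finite-dimensional case; in particular the higher regularity $MC^{k}$ with $k\ge 2$ of $X$ is not needed for the present statement, and becomes relevant only when one asks for differentiable dependence of $\ff_t(q)$ on $q$.
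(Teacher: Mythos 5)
A preliminary remark on the comparison: the paper does not actually prove this corollary — it is imported verbatim from \cite[Corollary 5.1]{k} — so the closest in-paper analogue of your argument is the proof of Theorem~\ref{flow}, whose estimates are run in the graded seminorms $\|\cdot\|^n_F$ of Theorem~\ref{semi}, not in the metric $\rho$. Your skeleton (reduce to $t_0=0$, localise, extract a uniform Lipschitz constant from continuity of $x\mapsto \dd X(x)$ in $D_{\rho,\rho}$, run Picard/contraction to get a single $\alpha$ for all $q$ in a smaller ball, and get uniqueness from Proposition~\ref{ht}) is the standard and correct architecture, and is in the spirit of the cited source.

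However, the step you defer as ``elementary facts to be extracted from absolute convexity'' is in fact the crux, and the deferral is not sound as stated: absolute convexity of the $\rho$-balls does \emph{not} give $\rho\big(\int_0^t f(s)\,ds,0\big)\le\int_0^t\rho(f(s),0)\,ds$, nor the factor $C\alpha$ in your contraction estimate. A counterexample is $\rho(x,y)=\min(|x-y|,1)$ on $\rr$, whose balls are absolutely convex: for $f\equiv x_0$ with $|x_0|=3$ and $t=1/3$ the left-hand side equals $1$ while the right-hand side equals $1/3$. What convexity and balancedness do give is only $\rho\big(\int_0^t g(s)\,ds,0\big)\le\sup_s\rho(g(s),0)$ for $|t|\le 1$ (the integral is $t$ times an average lying in the closed convex hull of the values). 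That weaker inequality is enough for your mean-value bound $\rho(X(y)-X(x),0)\le C\rho(y-x,0)$, but for the fixed-point operator it yields only $d_\infty(\Phi\ell,\Phi m)\le C\,d_\infty(\ell,m)$, with no smallness gained by shrinking $\alpha$, and likewise the invariance of the closed ball cannot be arranged via $M\alpha\le r/2$; since $C$ is the Lipschitz constant of $X$ and cannot be forced below $1$, $\Phi$ need not be a contraction for your metric $d_\infty$. The repair is exactly what the paper does in Theorem~\ref{flow}: transfer all estimates to the homogeneous Minkowski seminorms $\|\cdot\|^n_F$, where the integral inequality and the gain of a factor $\alpha$ (or a Gr\"onwall/Bielecki weight) are legitimate — but then you must supply a seminorm-wise Lipschitz bound $\|X(y)-X(x)\|^n_F\le\beta\,\|y-x\|^n_F$, i.e.\ the constant $\beta$ invoked in the proof of Theorem~\ref{flow}. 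That bound is a substantive property of $MC^1$-maps in the bounded framework, to be imported from \cite{k} and \cite{m}, not a formal consequence of convex balls; with it your argument (per seminorm, completeness of the space of curves into the closed ball, uniqueness from Proposition~\ref{ht}) goes through, and without it the proof has a hole precisely where the $MC^k$ hypothesis is meant to do its work.
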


\begin{theorem}\label{flow}
	Let $X$ be a $ \mc $-vector field on $ U \subset F $, $ k\geqq 1 $. There exists  a real number $ \alpha > 0 $
	such that for each $ x \in U $ there exists a unique integral curve $\ell_x(t)$ satisfying $ \ell_x(0)=x $  for all $ t \in I = (-\alpha, \alpha) $.
	Furthermore, the mapping
	$\ff : I \times U \rightarrow F$ given by $\ff_t(x)= \ff(t,x)= \ell_x(t) $ is of class $ \mc $.
\end{theorem}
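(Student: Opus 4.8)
The plan is to upgrade the local, pointwise results of Corollary \ref{s} to a uniform time of existence and then establish joint regularity of the flow map in $(t,x)$. First I would fix $p_0 \in U$ and invoke Corollary \ref{s} to get a neighborhood $U_0$ and $\alpha>0$ on which $\ff(t,x)$ is defined and, for each fixed $x$, is the unique integral curve through $x$. The first task is to show that $\alpha$ can be chosen uniformly on a (possibly smaller) neighborhood; this is the standard ODE bootstrapping argument, but it must be redone with the translation-invariant metric $\rho$ and the Lipschitz constant $\mathpzc{Lip}(X)_{g,\rho}$ in place of an operator norm. Concretely, I would write the integral curve as a fixed point of $\ell \mapsto x + \int_0^t X(\ell(s))\,ds$ on a complete metric space of curves (using completeness of $(F,\rho)$ and that $\rho$ has absolutely convex balls, so that the integral and the estimates behave well), and use the $\mc$ bound on $X$ to get a contraction on a time interval whose length depends only on the Lipschitz constant and the size of a ball around $p_0$ — not on the particular starting point. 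The cocycle identity $\ff_{t+s}=\ff_t\circ\ff_s$, valid whenever both sides are defined, then lets me patch local existence intervals.

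Next I would address continuity of $(t,x)\mapsto \ff(t,x)$. Here I would estimate $\rho(\ff(t,x),\ff(t',x'))$ by splitting into a time part, controlled by $\rho\big(\int_{t'}^t X(\ell_{x'}(s))\,ds,0\big)$ and hence by the (locally bounded) size of $X$ along the curve, and a space part $\rho(\ff(t,x),\ff(t,x'))$, controlled by a Grönwall-type inequality: $\rho(\ff(t,x),\ff(t,x')) \le \rho(x,x')\,e^{\mathpzc{Lip}(X)_{g,\rho}\,|t|}$, which again only uses the Lipschitz bound defining membership in $\mathcal{L}_{g,\rho}$. This gives $\ff \in MC^0$.

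To obtain $\ff \in MC^1$, the natural route is to differentiate the flow equation formally: the candidate for $\dd\ff_t(x)$ is the solution $\Phi(t)$ of the linearized (variational) equation $\frac{d}{dt}\Phi(t) = \dd X(\ff_t(x))\circ\Phi(t)$, $\Phi(0)=\id$, which is itself an $MC^{k-1}$ ODE in the Fréchet space $\mathcal{L}_{g,\rho}(F,F)$ (an Abelian topological group, so the previous existence/uniqueness machinery applies there as well). I would then show this $\Phi(t)$ is genuinely the Keller derivative of $x\mapsto\ff_t(x)$ by estimating the remainder $\ff_t(x+h)-\ff_t(x)-\Phi(t)h$ via Taylor expansion of $X$ and another Grönwall estimate, and check that $x\mapsto\dd\ff_t(x)$ lands in $\mathcal{L}_{g,\rho}(F,F)$ and is continuous, i.e. that $\ff$ is bounded differentiable. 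Differentiability in $t$ is immediate from the flow equation since $X$ is continuous. Finally, higher regularity $\ff\in\mc$ for $k\geqq 2$ follows by induction: the pair $(\ff,\Phi)$ solves a coupled autonomous $MC^{k-1}$-system on $U\times\mathcal{L}_{g,\rho}(F,F)$, so by the inductive hypothesis its flow is $MC^{k-1}$, whence $\ff$ is $\mc$.

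The main obstacle I anticipate is not the formal structure — which mirrors the finite-dimensional and Banach proofs — but verifying at each step that the estimates survive in the bounded Fréchet category: one must consistently work with $\rho$, $g$ and the $\mathpzc{Lip}(\cdot)_{g,\rho}$ functional rather than norms, confirm that $\mathcal{L}_{g,\rho}(F,F)$ together with its metric $D_{g,\rho}$ is complete so the variational equation can be solved there, and ensure that Keller-differentiability (not mere Gateaux differentiability) of the remainder terms is actually obtained, since the definition of $MC^1$ demands continuity of the derivative into $\mathcal{L}_{g,\rho}$. The Grönwall argument in particular needs the seminorm grading $\sn\cdot\sn_F^n$ to be compatible with the metric $\rho$, so some care is required in passing between the metric estimates and seminorm estimates when invoking Theorem \ref{semi}.
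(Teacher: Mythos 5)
Your proposal follows essentially the same route as the paper's proof: uniform local existence via Corollary \ref{s} (the paper simply cites it rather than redoing the contraction argument), a Gr\"onwall estimate giving Lipschitz dependence on the initial point and joint continuity, the variational equation $\frac{d}{dt}\mathbf{F}=\dd X(\ff(t,x))\circ\mathbf{F}$, $\mathbf{F}(0,x)=\id$ solved in $\mathcal{L}(F)$ and identified with $\dd\ff_t(x)$ through a remainder-plus-Gr\"onwall estimate, and induction on $k$ for higher regularity. The differences are only in bookkeeping (you spell out the fixed-point and cocycle details that the paper delegates to the cited corollary), so the approach matches.
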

\begin{proof}
	The first part of the proof follows from Corollary~\ref{s}. We now proof the second part.
	Let $ x,y \in U $ be arbitrary and
	define the maps $ \varphi_n (t) = \sn \ff (t,x) - \ff (t,y)  \sn^n_F$, $ \forall n \in \nn $. Since $ X $ 
	is $ \mc $, so it is globally Lipschitz. Let $ \beta > 0 $ be its Lipschitz constant we then have $\forall n \in \nn$
	\begin{equation*}
	\varphi_n (t) = \sn \int_0^t  \big ( X  (\ff (s,x)) - X (\ff (s,y)\big ) ds +x-y\sn^n_F \leqq \sn x-y \sn_F^n + \beta \int_{0}^{t} \varphi(s)ds. 
	\end{equation*}
	Thus, by Gronwall's inequality we obtain
	\begin{equation*}\label{les}
	\sn \ff (t,x) - \ff (t,y)  \sn^n_F \leqq e^{\beta \mid t \mid} \sn x -y \sn^n_F, \quad \forall n \in \nn.
	\end{equation*}
	Thereby $ \ff $ is Lipschitz continuous in the second variable and is jointly continuous.
	
	Now, define $ \mathbf{F}(t,x) \in \mathcal{L}_\rho(F)$
	to be the solution of the equations
	\begin{equation*}
	\dfrac{d \mathbf{F}(t,x)}{d t} = \dd X (\ff (t,x)) \circ \mathbf{F}(t,x), \quad \mathbf{F}(0,x) = \operatorname{id},
	\end{equation*}
	where $ \dd X (\ff (t,x)) : F \to F $ is derivative of $ X $ with respect to $ x $ at $ \ff (t,x) $.
	By Proposition~\ref{ht}  $ \mathbf{F}(t,x)$ exits and is well defined. Since the vector field $ \mathbf{F} \mapsto \dd X (\ff (t,x)) \circ \mathbf{F} $ on $ \mathcal{L}_\rho(F)$ is Lipschitz  in $ \mathbf{F} $, uniformly in $ (t,x) $ in a neighborhood of every $ (t_0,x_0) $, by the above
	argument it follows that  $ \mathbf{F}(t,x)$ is continuous in $ (t,x) $. We show that $ \dd \ff (t,x) = \mathbf{F}(t,x) $.
	Fix $ t \in I $, for $ h \in U $ define $ \psi (s,h) = \ff (s, x+h) - \ff (s,x)$, then
	\begin{align*}
	\psi (t,h)- \mathbf{F}(t,x)  (h) &= \int_{0}^{t} \big ( X (\ff (s,x+h)) - X (\ff (s,x))\big ) ds \\
	&- \int_{0}^{t} \big [\dd X (\ff (s,x)) \circ \mathbf{F}(s,x) \big ] (h)  ds \\
	&= \int_{0}^{t} \dd X (\ff (s,x)  ( \big[ (\psi (s,h) - \mathbf{F}(s,x) ( h)) \big ])  ds \\ &+ 
	\int_{0}^{t}  \big ( X (\ff (s,x+h)) - X (\ff (s,x) ) \\ &- \dd X (\ff (s,x))(\ff (s,x+h) - \ff (s,x)\big ) ds
	\end{align*}
	Since $ X $ is $ \mc $, for given $ \varepsilon>0 $ there is a $ \delta >0 $ such that $ \sn h \sn_F^n < \delta (\forall n \in \nn)  $ yields that the second term is less than 
	\begin{equation*}
	\int_{0}^{t}\varepsilon \sn \ff (s,x+h) - \ff (s,x) \sn_F^n, \quad \forall n \in \nn,
	\end{equation*}
	but by~\eqref{les} this integral is less than $B \varepsilon \sup_{n \in \nn}  \sn h \sn_F^n  $ for some positive constant 
	$ B $. Thus, by Gronwall's inequality we obtain $$ \sn \psi (t,h) - \mathbb{F}(t,h)(h) \sn_F^n \leqq \varepsilon C \sn h \sn_F^n, \quad  \forall n \in \nn,$$ where $ C $ is a positive constant. Whence $ \dd \ff (t,x)(h) = \mathbf{F}(t,x)(h)  $. Thus, both partial derivatives of $ \ff (t,x) $ exist and are continuous so $ \ff(t,x) $ is $ C^1 $. Moreover, $ \mathbf{F} $ is globally Lipschitz and $ x \mapsto \mathbf{F}(\cdot,x) $ is continuous therefore $ \ff(t,x) $ is $MC^1$. By induction on $ k $ we prove that $ \ff (t,x) $ is of class $ \mc $. By definition of $ \ff (t,x) $ 
	\begin{equation*}
	\dfrac{d }{d t} \ff (t,x)= X (\ff (t,x))
	\end{equation*}
	so
	\begin{equation*}
	\dfrac{d}{dt}\dfrac{d }{d t}\ff (t,x) = \dd X (\ff (t,x)) \big ( X (\ff (t,x)) \big)
	\end{equation*}
	and
	\begin{equation*}
	\dfrac{d}{dt} \dd \ff (t,x) = \dd X (\ff (t,x)) \big (\dd  \ff (t,x) \big).
	\end{equation*}
	The right-hand sides are $ MC^{k-1} $, so are the solutions by induction. Thus $ \ff (t,x) $ is $ \mc $.
\end{proof}
\section{Darboux charts}
In general for a Fr\'{e}chet manifold differential forms cannot be defined as the  sections of its cotangent bundle since
in general we can not define a manifold structure on the cotangent bundle, see \cite[Remark~I.3.9]{neeb}. To define differential forms we follow the approach of Neeb~\cite{neeb}.
\begin{defn}
	Let $M$ be a bounded Fr\'{e}chet manifold. A $p$-form $\omega$ on $M$ is a function 
	$\omega$ which associates to each $x \in M$ a $p$-linear 
	alternating map $\omega_x : T_x^p(M)\to \rr$ 
	such that in local coordinates  the
	map 
	$(x,v_1, \ldots, v_p) \mapsto \omega_{x}(v_1, \ldots, v_p)$
	is smooth. We write $\Omega^p(M,\rr)$ for the space of $p$-forms
	on $M$ and identify $\Omega^0(M,\rr)$ with the space $C^\infty(M,\rr)$ 
	of smooth functions. 
\end{defn}
The exterior differential 
$d_{dR} : \Omega^p(M,\rr) \to \Omega^{p+1}(M,\rr)$ is determined uniquely by the property that 
for each open subset $U \subset M$ we have for $X_0, \ldots, X_p \in \mathcal{ V}(U)$ 
in the space $C^\infty(U,\rr)$ the identity 
\begin{align*}
(d_{dR}\omega)(X_0, \ldots, X_p) 
&:= \sum_{i = 0}^p (-1)^i X_i.\omega(X_0, \ldots, \hat X_i, \ldots,X_p) \\
&+ \sum_{i < j} (-1)^{i+j}\omega([X_i, X_j], X_0, \ldots, \hat X_i,\ldots, \hat X_j, \ldots, X_p).
\end{align*}
Let  $ \omega \in \Omega^p(M,\rr)$,  $Y \in \mathcal{V}(M)$ and $ \ff_t $  the local flow of $ Y $. We define the usual {\it Lie derivative}  by 
$$
\mathcal{L}_Y \omega = \dfrac{d}{dt} (\ff_t^{*} \omega)  \mid_{t=0},
$$
which of course coincides by 
$$( \mathcal{L}_Y\omega)(X_1,\ldots, X_p) 
= Y.\omega(X_1, \ldots, X_p) - \sum_{j = 1}^p \omega(X_1, \ldots,
[Y, X_j], \ldots, X_p) $$
for $X_i \in \mathcal{V}(U)$, $U \subset M$ open. 
For each $X \in \mathcal{V}(M)$ and $p \geq 1$ we define a linear map 
$$ i_X  : \Omega^p(M,\rr) \to \Omega^{p-1}(M,\rr)  \quad \hbox{ with } \quad 
(i_X\omega)_x = i_{X(x)}\omega_x, $$
where 
$(i_v \omega_x)(v_1,\ldots, v_{p-1}) := \omega_x(v, v_1,\ldots, v_{p-1}).$
For $\omega \in \Omega^0(M,\rr) = C^\infty(M,\rr)$, we put $i_X \omega := 0$. 
For $X, Y \in \mathcal{V}(M)$, we have on $\Omega(M,\rr)$ the 
Cartan formulas (\cite[Proposition I.4.3]{neeb}) : 
\begin{equation*}
[\mathcal{L}_X, i_Y] = i_{[X,Y]}, \quad 
\mathcal{L}_X = d_{dR} \circ i_X + i_X \circ d_{dR} \quad \hbox{ and } \quad  
\mathcal{L}_X \circ d_{dR} = d_{dR} \circ \mathcal{L}_X.
\end{equation*} 
\begin{defn}
	Let $ M $ be a bounded Fr\'{e}chet manifold. We say that $ M $ is weakly symplectic if there exists a closed
	smooth 2-form $ \omega $ ($d_{dR} \omega = 0$) such that it is weakly non-degenerate i.e. for all $ x \in M $ and $ v_x \in T_x M $ 
	\begin{equation}\label{s}
	\omega_x(v_x,w_x)=0
	\end{equation}
	for all $ w_x \in T_xM $ implies $ v_x=0 $.
\end{defn}

The Darboux theorem is a local result so we consider the case
where $M$ is an open set $U$ of the Fr\'{e}chet model space $ F $. For the simplicity we suppose that  $0 \in U$.
Let $ x \in U $ be fixed and let $ F'_b $ be the strong dual of $ F $. Define the map $ \omega^{\hash}_x : F \to F'_b$ by 
$$ \langle w, \omega^{\hash}_x (v)  \rangle = \omega_x (w,v), $$
where $ \langle \cdot , \cdot \rangle $ is a duality pairing. Also, define $ H_x \coloneq \{ \omega_x (y,.) \mid y \in F \} $.
This is a subset of $ F^{'}_b $ and its topology is induced from it. 

\begin{lemma}
	Suppose $ x \in U $ is fixed and the model space $ F_x \simeq T_xM $ is nuclear. Then the map	$ \omega^{\hash}_x : F_x \to H_x $ is an isomorphism.
\end{lemma}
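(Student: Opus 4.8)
The plan is to establish that $\omega^\hash_x$ is a continuous linear bijection of $F_x$ onto $H_x$ whose inverse is again continuous, where $H_x$ carries the topology induced from the strong dual $F'_b$.

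I would begin with the purely algebraic content. For each fixed $v\in F_x$ the functional $\omega_x(\cdot,v)$ is continuous, because $\omega$ is a smooth and in particular continuous $2$-form, so $\omega_x\colon F_x\times F_x\to\rr$ is a continuous bilinear form; hence $\omega^\hash_x$ really does take values in $F'_b$. It is injective precisely by the weak non-degeneracy of $\omega$, and it is surjective onto $H_x$ by the very definition of $H_x$, since $\omega_x(y,\cdot)=-\omega^\hash_x(y)$. Thus $\omega^\hash_x$ is already a linear bijection $F_x\to H_x$, and only the two topological assertions remain; note that nuclearity has not been used up to this point.

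For continuity of $\omega^\hash_x\colon F_x\to F'_b$ I would use joint continuity of $\omega_x$: testing continuity at the origin and invoking bilinearity produces a constant $C>0$ and an index $n\in\nn$ with $|\omega_x(w,v)|\leqq C\,\sn w\sn_F^n\,\sn v\sn_F^n$ for all $w,v$. A $0$-neighbourhood basis of $F'_b$ is given by the polars $B^{\circ}$ of bounded sets $B\subseteq F_x$, and $M_B:=\sup_{b\in B}\sn b\sn_F^n<\infty$ for such $B$; the estimate then shows that $(\omega^\hash_x)^{-1}(B^{\circ})$ contains the $\sn\cdot\sn_F^n$-ball of radius $(CM_B)^{-1}$, so $\omega^\hash_x$ is continuous, and so is its corestriction to $H_x$.

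The real obstacle is continuity of the inverse $H_x\to F_x$, equivalently the \emph{openness} of $\omega^\hash_x$ onto $H_x$: one must exhibit, for each defining seminorm $\sn\cdot\sn_F^n$, a bounded set $B\subseteq F_x$ with $B^{\circ}\cap H_x\subseteq\omega^\hash_x(\{v:\sn v\sn_F^n\leqq 1\})$, that is, a bounded $B$ for which $\sup_{b\in B}|\omega_x(b,v)|$ \emph{dominates} $\sn v\sn_F^n$. This is exactly the point at which nuclearity of $F_x$ must be spent, and I expect it to be the delicate step. I would attempt to exploit the Hilbertian presentation of a nuclear Fréchet space --- a fundamental system of Hilbert seminorms whose linking maps between the associated local Hilbert spaces are Hilbert--Schmidt, hence compact --- and to transport the pairing $(b,v)\mapsto\omega_x(b,v)$ through this factorisation, the hope being that compactness of a linking map turns the equicontinuous family $\omega^\hash_x(B)$ into one that is simultaneously norming for some $\sn\cdot\sn_F^m$, whence the reverse estimate. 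The bookkeeping with the local Hilbert spaces and their duals is the least pleasant part, and I would carry it out last; should that route stall, the fallback is the abstract machinery --- $F_x$ is webbed because it is Fréchet, $H_x$ is webbed as a subspace of the webbed strong dual $F'_b$, and $\omega^\hash_x$ is a continuous linear surjection $F_x\to H_x$, so an open mapping theorem in the sense of De Wilde should close the argument, provided the requisite barrelledness-type hypothesis on $H_x$ can be extracted, and that extraction is once more where nuclearity of $F_x$, via the Montel property of its strong dual, re-enters.
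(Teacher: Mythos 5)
Your handling of the elementary parts is fine and agrees with the paper: injectivity is weak non-degeneracy, surjectivity is the definition of $H_x$, and your polar/seminorm estimate for the continuity of $\omega^{\hash}_x$ into $F'_b$ is correct (the paper leaves this forward continuity implicit). The genuine gap is in the only step that carries real content, the continuity of $(\omega^{\hash}_x)^{-1}\colon H_x\to F_x$: you do not prove it, you only sketch two candidate strategies and hedge on both. The Hilbert--Schmidt/local-Hilbert-space route is left as a hope, and it is not mere bookkeeping: what you must produce is, for each $n$, a bounded $B\subset F_x$ with $\sup_{b\in B}|\omega_x(b,v)|$ dominating $\sn v\sn_F^n$, which is exactly a barrelledness-type property of the subspace $H_x$ in its induced topology, and nothing in your sketch explains how compactness of the linking maps yields this for the (possibly non-closed) range $H_x$. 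Your fallback via De Wilde has the same hole: that open mapping theorem needs the \emph{codomain} $H_x$ to be ultrabornological (the webbedness you invoke for $H_x$ is beside the point for that direction, and in any case only sequentially closed subspaces are covered by the standard permanence results), and you explicitly defer the ``extraction'' of that hypothesis.

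That deferred extraction is precisely what the paper's proof consists of: since $F_x$ is nuclear, its strong dual $F'_b$ is a $DFN$-space, barrelled and Mackey, and by the cited results of P\'erez Carreras--Bonet the subspace $H_x$ with the induced topology is ultrabornological and barrelled; the open mapping theorem (a continuous linear bijection from a Fr\'echet space onto a barrelled space is open) then gives continuity of the inverse. So your second route is essentially the paper's route, but the decisive input --- barrelledness/ultrabornologicity of $H_x$, which is where nuclearity is actually spent --- is missing from your argument, and without it neither of your sketches closes.
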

\begin{proof}
	Condition~\ref{s} implies that
	$  \omega_x^{\hash}$ is injective and by the definition of $ H_x $, it is surjective.
	The space $ F_x $ is nuclear so its strong dual is $ DFN $-space and barreled.
	The dual space is Mackey (cf. \cite[5.3.4]{kc}) and $ H_x $ inherits its topology (see \cite[0.4.2, 0.4.3]{kc} so is
	ultrabornological and barrelled (cf.\cite[8.6.9]{kc}). Thus, by the open mapping theorem~\cite[Theorem 4.35]{rr} the inverse mapping is continuous, so $\omega^{\hash}_x$ is isomorphism.

\end{proof}
We will need the following result.
\begin{lemma}\cite[(Poincar\'{e} Lemma) II.3.5]{neeb2}\label{po}
	Let $E$ be locally convex, $V$ a sequentially complete space and $U \subset E$
	an open subset which is star-shaped with respect to $0$. Let $\omega \in \Omega^{k+1}(U,V)$  be a $V$-valued closed
	$(k + 1)$-form. Then $\omega$ is exact. Moreover, $\omega = d_{dR}\alpha$ for some $ \alpha \in \Omega^{k}(U,V) $ with $ \alpha (0)=0 $ given by
	$$
	\alpha (x)(v_1,\cdots v_k) = \int_{0}^{1}t^k \omega (tk)(x,v_1, \cdots v_k)dt.
	$$
\end{lemma}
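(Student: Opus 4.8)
The plan is to produce an explicit chain homotopy operator out of the radial contraction. Since $U$ is star-shaped with respect to $0$, the map $\Psi\colon [0,1]\times U\to U$, $\Psi(t,x)=tx$, is well defined and smooth, with $\Psi_{1}=\id_{U}$ and $\Psi_{0}$ the constant map $x\mapsto 0$. First I would pull $\omega$ back along $\Psi$: a tangent vector to $[0,1]\times U$ at $(t,x)$ has the form $(s,v)$ with $s\in\rr$, $v\in E$, and $\dd\Psi_{(t,x)}(s,v)=sx+tv$, so $(\Psi^{*}\omega)_{(t,x)}\big((s_{0},v_{0}),\dots,(s_{k},v_{k})\big)=\omega_{tx}(s_{0}x+tv_{0},\dots,s_{k}x+tv_{k})$. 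Decompose $\Psi^{*}\omega=\dd t\wedge\beta_{t}+\gamma_{t}$ with $\beta_{t}\in\Omega^{k}(U,V)$ and $\gamma_{t}\in\Omega^{k+1}(U,V)$ depending on $t$; reading off the $\dd t$-component (take $s_{0}=1$, $v_{0}=0$, $s_{i}=0$ for $i\ge 1$) gives $\beta_{t}(x)(v_{1},\dots,v_{k})=\omega_{tx}(x,tv_{1},\dots,tv_{k})=t^{k}\,\omega_{tx}(x,v_{1},\dots,v_{k})$, while the $\dd t$-free part is $\gamma_{t}=\Psi_{t}^{*}\omega$ with $\Psi_{t}(x)=tx$.

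Next I would set $\alpha(x)(v_{1},\dots,v_{k}):=\int_{0}^{1}\beta_{t}(x)(v_{1},\dots,v_{k})\,\dd t=\int_{0}^{1}t^{k}\,\omega_{tx}(x,v_{1},\dots,v_{k})\,\dd t$, which is precisely the asserted formula. The $V$-valued integral exists because $V$ is sequentially complete and the integrand is a continuous (indeed smooth) curve on the compact interval $[0,1]$; smoothness of $\alpha$ in local coordinates follows from smoothness of $\omega$ and of the scaling and evaluation maps, together with the standard fact that parameter-dependent integrals of smooth curves into a sequentially complete locally convex space are again smooth, with differentiation permitted under the integral sign. That $\alpha(0)=0$ is read off at once, since $\omega_{0}$ is multilinear and acquires the argument $0$.

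The core of the argument is the transgression identity. Because $\omega$ is closed and $d_{dR}$ is natural under smooth maps (cf.~\cite{neeb}), the form $\Psi^{*}\omega$ is closed on $[0,1]\times U$. Expanding $d_{dR}(\Psi^{*}\omega)=0$ in the decomposition $\dd t\wedge\beta_{t}+\gamma_{t}$ and separating the terms that do and do not contain $\dd t$ yields $\partial_{t}\gamma_{t}=d_{dR}\beta_{t}$, where on the right $d_{dR}$ differentiates only in the $U$-directions. Integrating this over $t\in[0,1]$ and interchanging $d_{dR}$ with $\int_{0}^{1}\dd t$ gives $\gamma_{1}-\gamma_{0}=d_{dR}\alpha$. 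Since $\Psi_{1}=\id_{U}$ we have $\gamma_{1}=\omega$, and since $\Psi_{0}$ is constant and $k+1\ge 1$ we have $\gamma_{0}=0$; hence $\omega=d_{dR}\alpha$, as claimed.

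I expect the main obstacle to be purely analytic: justifying, in the locally convex and bounded-differentiable setting rather than the finite-dimensional one, the existence of the $V$-valued integral, the smoothness of $\alpha$ in charts, and above all the interchange of $d_{dR}$ with $\int_{0}^{1}\dd t$. All three rest on the calculus of smooth curves with values in a sequentially complete locally convex space, which is exactly why that hypothesis on $V$ is imposed; the combinatorial part (deriving $\partial_{t}\gamma_{t}=d_{dR}\beta_{t}$ from closedness and extracting $\beta_{t}$ from $\Psi^{*}\omega$) is routine once the pullback along $\Psi$ has been set up with care.
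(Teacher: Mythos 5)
Your argument is correct and is precisely the standard homotopy/fiber-integration proof of the Poincar\'{e} lemma for sequentially complete target spaces; the paper itself gives no proof but cites Neeb (II.3.5), whose argument is exactly this contraction $\Psi(t,x)=tx$, the splitting $\Psi^{*}\omega = \dd t\wedge\beta_t+\gamma_t$, and the transgression identity with the interchange of $d_{dR}$ and $\int_0^1 \dd t$ justified by sequential completeness of $V$. Note also that your formula $\alpha(x)(v_1,\dots,v_k)=\int_0^1 t^k\,\omega(tx)(x,v_1,\dots,v_k)\,\dd t$ is the correct one; the paper's statement contains a typo, writing $\omega(tk)$ where $\omega(tx)$ is meant.
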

We assert the following theorem for some open neighborhood of $ 0 \in F $.
\begin{theorem}\label{dar}
	Let the Fr\'{e}chet model space $ F $ be nuclear.  Assume 
	\begin{enumerate}
		\item \label{1} there exits an open neighborhood $ \mathcal{U} $ of $ zero $ such that all the spaces $ H_x $ are locally identical and $ \omega_{x}^{t\hash} : F \to H$ is isomorphism for each $ t \in [0,1]$ and  $ x \in \mathcal{U} $.
		
		\item \label{2} for $ x \in \mathcal{U} $ the map $ (\omega_{x}^{t\hash})^{-1}: H \to F$ is a field of isomorphism of class $ MC^{\infty} $. 
	\end{enumerate} 
	Then $ \omega $ is locally isomorphic at zero to the constant form $ \omega(0) $. 
\end{theorem}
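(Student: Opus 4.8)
The plan is to carry out Moser's deformation argument within the bounded-differentiable calculus set up above. Put $\sigma:=\omega-\omega(0)$ and interpolate between the constant form and $\omega$ by the family of $2$-forms $\omega_t:=\omega(0)+t\sigma$, $t\in[0,1]$, defined on the neighbourhood $\mathcal U$ of the hypotheses, which — shrinking if necessary — we may take to be absolutely convex, in particular star-shaped with respect to $0$. Each $\omega_t$ is closed, since $\omega$ is closed and the constant form $\omega(0)$ is closed. The aim is to produce an isotopy $\psi_t$, $t\in[0,1]$, of $MC^\infty$-diffeomorphisms defined on a neighbourhood of $0$, with $\psi_0=\id$ and $\psi_t^{*}\omega_t=\omega(0)$ for all $t$; evaluation at $t=1$ then gives $\psi_1^{*}\omega=\omega(0)$, which is exactly the assertion that $\omega$ is locally isomorphic at $0$ to $\omega(0)$.

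Differentiating the target identity in $t$ and using the Cartan calculus recalled above together with its standard time-dependent refinement $\tfrac{d}{dt}\psi_t^{*}\omega_t=\psi_t^{*}\!\big(\mathcal L_{X_t}\omega_t+\dot\omega_t\big)$ — where $X_t$ is the time-dependent generator of $\psi_t$ — the condition $\tfrac{d}{dt}\psi_t^{*}\omega_t\equiv0$ reduces to $\mathcal L_{X_t}\omega_t=-\sigma$. As $d_{dR}\omega_t=0$, Cartan's formula gives $\mathcal L_{X_t}\omega_t=d_{dR}(i_{X_t}\omega_t)$, while Lemma~\ref{po}, applied to the closed $2$-form $\sigma$ with $V=\rr$ (sequentially complete), furnishes a smooth $1$-form $\alpha$ with $\alpha(0)=0$ and $\sigma=d_{dR}\alpha$, given by the explicit integral formula. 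Hence it suffices to solve the pointwise linear equation $i_{X_t}\omega_t=-\alpha$, i.e. $\omega_x^{t\hash}\big(X_t(x)\big)=\alpha(x)$. One first checks, from the integral formula for $\alpha$ and hypothesis~(\ref{1}), that $\alpha(x)$ actually lies in the common space $H$: for fixed $x$ each integrand has the form $t\,\sigma_{tx}(x,\cdot)=t\big(\omega_{tx}(x,\cdot)-\omega(0)(x,\cdot)\big)$, which lies in $H_{tx}=H$, and since $H\cong F$ is complete the integral remains in $H$. Hypothesis~(\ref{1}) then provides the unique solution $X_t(x)=(\omega_x^{t\hash})^{-1}\big(\alpha(x)\big)$, and hypothesis~(\ref{2}), together with the smoothness of $\alpha$ and the affine (hence smooth) dependence of $\omega_x^{t\hash}$ on $(t,x)$, shows that $(t,x)\mapsto X_t(x)$ is of class $MC^\infty$.

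It remains to integrate $X_t$. I would pass to the suspended autonomous vector field $\widehat X(t,x):=(1,X_t(x))$ on an open subset $I'\times\mathcal U$ of $\rr\times F$, with $I'\supset[0,1]$ open; this is an $MC^\infty$-vector field on a Fréchet space (the product metric again has absolutely convex balls), so Theorem~\ref{flow} yields its $MC^\infty$-flow, whose slice at initial time $0$ is the desired isotopy $\psi_t$ with $\psi_0=\id$. Since $\alpha(0)=0$ forces $X_t(0)=0$, the origin is stationary and $\psi_t(0)=0$ for all $t$; therefore, although Theorem~\ref{flow} only guarantees a uniform existence time $\beta>0$, finitely many applications of the local-flow property on successively smaller neighbourhoods of $0$ — using $\psi_t(0)=0$ and continuity of $\psi_t$ — extend $\psi_t$ to all $t\in[0,1]$ on some neighbourhood $\mathcal V\subseteq\mathcal U$ of $0$. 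By construction $\tfrac{d}{dt}\psi_t^{*}\omega_t\equiv0$ on $\mathcal V$, so $\psi_t^{*}\omega_t=\psi_0^{*}\omega_0=\omega(0)$; in particular $\psi_1^{*}\omega=\omega(0)$, and $\psi_1$ is an $MC^\infty$-diffeomorphism onto its image, which proves the theorem.

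I expect the crux to lie in two places, both anticipated by the hypotheses. First, the Moser equation must be solved \emph{inside} the bounded category: one needs $\alpha(x)\in H$ — exactly why all the $H_x$ are required to be locally identical in~(\ref{1}) — the invertibility of $\omega_x^{t\hash}$ for \emph{every} $t\in[0,1]$, and the $MC^\infty$-regularity in $(t,x)$ of the solution $X_t$, which is what~(\ref{2}) supplies. Second, one must upgrade the short-time flow of Theorem~\ref{flow} to a flow on the whole interval $[0,1]$ near $0$; stationarity of the origin is essential here, since without a common fixed point there would be no uniform way to reach time $1$. The remaining points — closedness of $\omega_t$, the Cartan identities, and the time-dependent Moser formula — are routine manipulations in the calculus already established.
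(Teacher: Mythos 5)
Your proposal is correct and follows essentially the same route as the paper: Moser's trick, with the Poincar\'{e} lemma (Lemma~\ref{po}) producing the primitive $\alpha$, hypotheses~(\ref{1})--(\ref{2}) used to solve $i_{X_t}\omega_t=-\alpha$ inside the bounded category, and Theorem~\ref{flow} used to integrate the resulting vector field. In fact your write-up is tighter than the paper's at three points: your interpolation $\omega_t=\omega(0)+t(\omega-\omega(0))$ has the sign that actually yields $\psi_1^{*}\omega=\omega(0)$ (the paper's $\omega^t=\omega_0+t(\omega_0-\omega)$ gives $\omega^1=2\omega_0-\omega$, so its final line only follows after correcting this), you justify $\alpha(x)\in H$ directly from the Poincar\'{e} integral formula rather than the paper's circular remark ``$\alpha=i_{X_1}\omega$ so $\alpha\in H$'', and you make explicit both the suspension needed to apply Theorem~\ref{flow} to a time-dependent field and the extension of the flow to all of $[0,1]$ near $0$ via stationarity of the origin, which the paper leaves implicit.
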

\begin{proof}
	On $ \mathcal{ U} $ define $ \omega^t = \omega_0 + t(\omega_0 - \omega)$ for $ t \in [0,1] $, where $ \omega_0= \omega(0) $.
	By Lemma~\ref{les} there exist 1-form $ \alpha $ locally such that $ d_{dR} \alpha = \omega_0 -\omega $ and $ \alpha(0) =0 $.
	Consider a time-dependent vector field $ X_t : \mathcal{U} \to F $ such that $$i_{X_t} \omega^t = -\alpha. $$
	Thus, $ \alpha = i_{X_1}\omega  $ so $ \alpha \in H $. By Condition~\ref{1} for
	$ x \in \mathcal{ U}$ and all $ t $, $\omega_{x}^{t\hash} $ is isomorphism hence 
	$$
	X_t  \coloneq (\omega_{x}^{t\hash})^{-1}\alpha
	$$
	is well defined. By Condition~\ref{2}, $ X_t $ is $ MC^{\infty} $ so Theorem~\ref{flow} implies that there exists an smooth isotopy $ \ff_t $
	generated by $ X_t $ and for $ t \in [0,1] $ it satisfies 
	\begin{equation}\label{fg}
	\ff_t^{*}\omega^t  = \omega_0	
	\end{equation}
	To solve~\ref{fg}, we need to solve 
	\begin{equation}
	\dfrac{d}{dt}\ff_t^{*}\omega^t = 0.
	\end{equation}
	We have by product rule of derivative and the Cartan formula 
	\begin{align}
	\dfrac{d}{dt}\ff_t^{*}\omega^t &= \ff_t^{*} (\mathcal{L}_{{X}_t} \omega^t)+ \ff_t^{*}\dfrac{d}{dt}\omega^t \\
	&= \ff_t^{*} ( \dfrac{d}{dt} \omega^t - d_{dR}(i_{X_t} \omega^t))\\
	&= \ff_t^* (-d \alpha + \omega_0 - \omega)= 0.
	\end{align}
	Thus, $ \ff_1^{*} \omega_1 =  \ff^{*}_0\omega_0$ so $ \ff_1^{*} \omega = \omega_0$.
\end{proof}
\begin{remk}
	In projective limit approach despite the fact that many interesting results can be recovered for Fr\'{e}chet manifolds
	there are some limitations. To construct geometric and topological objects we need to establish the
	existence of compatible projective limits of their Banach corresponding factors, this would not be easy in some cases and also we can not use
	some known results (e.g. the Poincare lemma for locally convex spaces) so it is imposed the additional condition in\cite[Theorem 4.2]{k3} for the existence of the required differential form as the Poincare lemma is not available in this setting. Also, we need a rather strong Lipschitz condition on mappings for the existence of local flows. In contrast, in metric approach we can apply known facts from the metric geometry and locally convex spaces that simplify proofs. There are some restrictions in this approach also; it is not easy to check $ \mc $-differentiability and the class of bounded maps can be very small. However as mentioned, manifolds of Riemannian metrics have the structure
	of nuclear bounded Fr\'{e}chet manifolds and Theorem~\ref{dar} can be used to study their cohomology, but it is not yet clear how to construct a symplectic structure that can be applied in this context.

\end{remk}

\end{document}